\renewcommand{\phi}{\varphi}
\newtheorem{theorem}{Theorem}
\newtheorem{lemma}{Lemma}
\newtheorem{remark}{Remark}
\newtheorem*{assumption}{Assumption}
\newtheorem{definition}{Definition}
\begin{document}
\date{\today}
\title{A representation formula for the probability density in stochastic dynamical systems with memory}
 \author{\small{\it{Fang Yang$^{a, b}$ and Xu Sun$^{a, b}$\footnote{Corresponding author: xsun@hust.edu.cn}   \footnote{The authors are supported by National Natural Science Foundation Grant 11531006.}} }\\
 \\ \small{$^a$ School of Mathematics and Statistics, Huazhong University of Science and Technology,   Wuhan 430074, Hubei, China.} \\
 \small{$^b$ Center for Mathematical Sciences, Huazhong University of Science and Technology,
  Wuhan 430074, Hubei, China.}
}
\bigskip

\date{Feb. 8th, 2021}
\maketitle

\pagestyle{plain}

Abstract:  Marcus stochastic delay differential equations (SDDEs) are often used to model stochastic dynamical systems with memory in science and engineering.  Since no infinitesimal generators exist for Marcus SDDEs due to the non-Markovian property, conventional Fokker-Planck equations, which govern the evolution behavior of density,  are not available for Marcus SDDEs. In this paper, we identify the Marcus SDDE with some Marcus stochastic differential equation (SDE) without delays but subject to extra constraints. This provides an efficient way to establish existence and uniqueness for the solution, and obtain a representation formula for probability density of the Marcus SDDE. In the formula, the probability density for Marcus SDDE is expressed in terms of that for Marcus SDE without delay.

\bigskip

Key words: Marcus integral, stochastic differential equations, stochastic delay differential equations, L\'evy processes, non-Gaussian white noise.

\section{Introduction}

Stochastic differential equations (SDEs) driven by L\'evy processes are widely used to model stochastic dynamical systems perturbed by non-Gaussian white noises. While It\^o SDEs driven by L\'evy processes are widely applied in biology and finance \cite{Applebaum2009,Duan2015}, Marcus SDEs \cite{Marcus1978,Marcus1981,Kurtz1995} are more appropriate models in physics and engineering \cite{Sun2013,Sun2017}.  To study the propagation and evolution of the uncertainty in stochastic dynamical systems, it is essential to study the dynamics of the probability density, which contains all the statistical information about the uncertainty.  For SDEs without delays, the dynamical behavior of the probability density is governed by Fokker-Planck equations. Fokker-Planck equations corresponding to Marcus SDEs driven by L\'evy processes are derived in \cite{Sun2017}.

Stochastic delay differential equations (SDDEs) are appropriate models for some stochastic dynamical systems with memory, i.e., the future states of the system depend on not only the current but also the past states. The time delays, especially those appearing in diffusion terms, often make it mathematically challenging to study the solution of SDDEs and the associated density. Existence and uniqueness of the solution to  Marcus SDDEs without delays in diffusion terms are established in \cite{K2016}. Generally speaking, Fokker-Planck equations, which contain the adjoint of the infinitesimal generator, are not available for SDDEs due to their non-Markovian property. Recently, the density associated SDDEs driven by Brownian motions is discussed in \cite{Zheng2017G}.

The objective of this paper is twofold: (i) To establish the existence and uniqueness for the solutions to general Marcus SDDEs driven by L\'evy processes. (ii) To derive a representation formula for the probability density associated with these Marcus SDDEs.  This paper is organized as follows. In Section 2,  we present some preliminary results that will be used in later sections. We deal with above objectives (i) and (ii) in Sections 3 and 4, respectively.

\section{Some preliminary results}
First, we introduce some notations for vectors. Throughout this paper, each element in $\mathbb{R}^{d}$ is represented as a column vector. Given $x_1, x_2, \cdots, x_k \in \mathbb{R}^{d}$, then $\left(x_1^{T}, x_2^{T}, \cdots, x_k^{T}\right)^{T}$ is a column vector in $\mathbb{R}^{kd}$. Here $\{\cdot\}^{T}$ represents the transpose of $\{\cdot\}$. 

 Now, we review the definition of Marcus SDEs driven by L\'evy processes and without delays. Consider
\begin{align}\label{e1}
\begin{split}
\begin{cases}
&{\rm d}Z(t)=\alpha(Z(t),t){\rm d}t+\beta(Z(t),t)\diamond {\rm d}L(t), ~~{\rm for}~t>0,\\
&Z(0)=z_0 \in \mathbb{R}^{d},
\end{cases}
\end{split}
\end{align}
where $Z(t)\in \mathbb{R}^{d}$, $\alpha: \mathbb{R}^{d} \times \mathbb{R}^{+}\rightarrow \mathbb{R}^{d}, (x,t)\rightarrow \alpha(x,t)$, $\beta: \mathbb{R}^{d} \times \mathbb{R}^{+} \rightarrow \mathcal{M}^{d\times n}, (x,t)\rightarrow \beta(x,t)=(\beta_{ij})_{d\times n}$ and $L(t)\in \mathbb{R}^{n}$. Here $\mathcal{M}^{d\times n}$ is the set of all $d$-by-$n$ real matrix. 

By L\'evy-It$\hat{\rm o}$ decomposition \cite{Applebaum2009}, the L\'evy process $L(t)$ can be expressed as 
\begin{align}\label{e2}
L(t)=bt+B_{A}(t)+\int_{\|y\|<1}y\widetilde{{N}}(t,{\rm d}y)+\int_{\|y\|\geq 1} yN(t,{\rm d}y),
\end{align}
where $b\in \mathbb{R}^{n}$ is a drift vector, ${B}_{A}(t)$ is the $n$-dimensional Brownian motion with the covariance matrix $A$, $N(t,{\rm d}y)$ is the Poisson random measure defined as 
\begin{align}\label{e3}
N(t,S)(\omega)=\#\left\{ s|0\leq s\leq t; \Delta L(s) (\omega) \in S \right\},
\end{align}
with $\#\{ \cdot \}$ representing the number of elements in the set $``\cdot"$. $S$ is the Borel set in $\mathcal{B}\left(\mathbb{R}^{d}\setminus \{ 0\}\right)$, $\Delta L(s)$ is the jump of $L(s)$ at time $s$ defined as $\Delta L(s)=L(s)-L(s-)$, and $\widetilde{{N}}({\rm d}t,{\rm d}y)$ is the compensated Poisson measure defined as $\widetilde{{N}}({\rm d}t,{\rm d}y)={{N}}({\rm d}t,{\rm d}y)-{\rm d}t\nu({\rm d}y)$.

Note that it's convenient to write the $n$-dimensional Brownian motion $B_A(t)$ in the form of $B_A(t)=\sigma B(t)$ \cite{Applebaum2009}, where $B(t)$ is a standard $n$-dimensional Brownian motion and $\sigma$ is an $n\times n$ nonzero matrix for which $A=\sigma \sigma^{T}$, and $B_A^i(t)=\sum_{j=1}^{n}\sigma_{ij}B_j(t)$ for $i=1,2,\cdots, n$. 

To proceed,  the following definition is needed.

\begin{definition}
	For $u=(u_1,u_2,\cdots, u_{d})^{T} \in \mathbb{R}^{d} $ and $v=(v_1,v_2,\cdots, v_{n})^{T} \in \mathbb{R}^{n}$, the mapping $H$ is defined by 
	\begin{align}\label{e4}
	H: \mathbb{R}^{d}\times \mathbb{R}^{n} \rightarrow \mathbb{R}^{d},~~~(u,v) \mapsto H(u,v)=\Psi(1),
	\end{align}	
	where $\Psi: \mathbb{R} \rightarrow \mathbb{R}^{d},~r\mapsto \Psi(r)$ is the solution of the following ordinary differential equation ${\rm (}$ODE ${\rm )}$
	\begin{align}\label{e5}
	\dfrac{{\rm d}\Psi(r)}{{\rm d}r}= \beta \left(\Psi(r)\right)v, ~~\Psi(0)=u.
	\end{align}
\end{definition}

The $\mathbb{R}^{d}$-valued strong solution $Z(t)$ of Marcus SDE \eqref{e1} is defined  as 
\begin{align}\label{e6}
Z(t)=z_0+\int_{0}^{t}\alpha \left(Z(s), s\right){\rm d}s+\int_{0}^{t}\beta \left(Z(s-),s\right) \diamond {\rm d}L(s),
\end{align}
where the left limit $Z(s-)=\underset{u<s, u\rightarrow s}{{\rm lim} }Z(u)$ and
$``\diamond"$ indicates Marcus canonical integral defined by following.
For each $t\geq 0$,
	\begin{align}\label{e7}
	\begin{split}
	\int_{0}^{t}{\beta }\left(Z(s-),s\right) \diamond {\rm d}L(s)&=\int_{0}^{t}\beta \left(Z(s-),s\right) {\rm d}L(s)+\int_{0}^{t}C(Z(s-),s){\rm d}t\\
	&+\sum_{0\leq s \leq t}\left[H(Z(s-),\Delta L(s))-Z(s-)-\beta (Z(s-),s)\Delta  L(s)\right],
	\end{split}
	\end{align}
	where $C(Z(s-),s)$ is a vector in $\mathbb{R}^{d}$ with the $i$-th element as
	\begin{align}\label{e8}
	C_i(Z(s-),s)=\dfrac{1}{2}\sum_{m=1}^{d}\sum_{j=1}^{n}\sum_{l=1}^{n}\beta_{ml}\left(Z(s-),s\right)\dfrac{\partial}{\partial x_m}\beta_{ij}\left(Z(s-),s\right)A_{lj},\quad~~i=1,2,\cdots, d.
	\end{align}

\begin{remark}
As shown in \cite{Kurtz1995}, the Marcus integral operation $``\diamond"$ satisfies the chain rule of classical calculus, and can be obtained as the limit of Wong-Zakai approximation.
\end{remark}

We have the following result from \cite{Applebaum2009,Kurtz1995,Protter2004}.
	
	\begin{lemma} \label{l1}
	Suppose $\forall x \in \mathbb{R}^{d}$, $\alpha (x,t)$, $\beta(x,t)$ and $\frac{\partial}{\partial x}\beta(x,t)$ satisfy the following two conditions,
\begin{itemize} 
	\item[{\rm (i)}] continuous with respect to $t$;
	\item[{\rm(ii)}] globally  Lipschitz with respect to $x$, i.e., $\forall x_1, x_2 \in \mathbb{R}^{d}$, $t\in \mathbb{R}^{+}$,
	\begin{align}\label{e1_9}
	\begin{split}
	&\|\alpha(x_1,t)-\alpha(x_2,t)\|+\|\beta(x_1,t)-\beta(x_2,t)\|+\bigg\|\frac{\partial }{\partial x}\beta(x_1,t)-\frac{\partial }{\partial x}\beta(x_2,t)\bigg\|\leq C\|x_1-x_2\|,
	\end{split}
	\end{align}
	where $C$ is constant in $\mathbb{R}$.
\end{itemize}
 Then there exists a unique strong solution to the Marcus SDE \eqref{e1}.
	
	\end{lemma}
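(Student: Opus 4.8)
The plan is to reduce the Marcus SDE \eqref{e1} to an equivalent It\^o-type SDE driven by the same L\'evy process $L$, whose coefficients inherit the Lipschitz and linear-growth structure assumed on $\alpha$, $\beta$ and $\partial_x\beta$, and then to invoke the classical existence and uniqueness theory for It\^o--L\'evy SDEs through Picard iteration and a contraction argument. The first task is to analyze the flow map $H$ from \eqref{e4}--\eqref{e5}. Since $\beta$ is globally Lipschitz and, by conditions (i)--(ii), of at most linear growth, for each fixed $v$ the ODE \eqref{e5} admits a unique global solution, so $H(u,v)=\Psi(1)$ is well defined. Comparing two solutions $\Psi_1,\Psi_2$ of \eqref{e5} started at $u_1,u_2$ and applying Gr\"onwall's inequality yields
\[
\|H(u_1,v)-H(u_2,v)\|\le \|u_1-u_2\|\,e^{C\|v\|},
\]
so $H(\cdot,v)$ is Lipschitz with a $v$-dependent constant. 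A second-order Taylor expansion of the flow at $r=0$, using $\Psi''(r)=\big(\partial_x\beta(\Psi(r))\,\beta(\Psi(r))v\big)v$, gives the refined estimate $\|H(u,v)-u-\beta(u)v\|\le K(1+\|u\|)\|v\|^2$, where the Lipschitz continuity of $\partial_x\beta$ controls the remainder on bounded sets.

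Next I would substitute the L\'evy--It\^o decomposition \eqref{e2} into \eqref{e6}--\eqref{e7}. The Marcus drift correction $C$ of \eqref{e8} is absorbed into the drift, while the jump sum in \eqref{e7} combines with the jump part of $\int_0^t\beta\,{\rm d}L$: the linear displacements $\beta(Z(s-),s)\Delta L(s)$ cancel exactly, and the total jump contribution becomes $\sum_{0\le s\le t}\big[H(Z(s-),\Delta L(s))-Z(s-)\big]$. After compensating the small jumps this recasts \eqref{e1} as
\begin{align*}
Z(t)=z_0 &+\int_0^t\widetilde\alpha(Z(s),s)\,{\rm d}s+\int_0^t \beta(Z(s-),s)\sigma\,{\rm d}B(s)\\
&+\int_0^t\!\!\int_{\|y\|<1}\!\!\big[H(Z(s-),y)-Z(s-)\big]\,\widetilde N({\rm d}s,{\rm d}y)\\
&+\int_0^t\!\!\int_{\|y\|\ge1}\!\!\big[H(Z(s-),y)-Z(s-)\big]\,N({\rm d}s,{\rm d}y),
\end{align*}
with $\widetilde\alpha=\alpha+C+\beta b$, a standard It\^o--L\'evy SDE.

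I would then verify that the coefficients $\widetilde\alpha$, $\beta\sigma$ and $F(x,y):=H(x,y)-x$ satisfy the hypotheses of the standard theorem quoted from \cite{Applebaum2009,Protter2004}: global Lipschitz continuity and linear growth in $x$, together with the integrability of $F$ against the L\'evy measure. The bounds for $\widetilde\alpha$ and $\beta\sigma$ follow directly from (i)--(ii), noting that $C$ is a bilinear expression in $\beta$ and $\partial_x\beta$. Granting these, existence and uniqueness of the strong solution follow from Picard iteration and the Banach fixed-point theorem on the space of adapted c\`adl\`ag processes equipped with the norm $\big(\sup_{t\le T}\mathbb{E}\|Z(t)\|^2\big)^{1/2}$.

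I expect the main obstacle to be the control of the jump coefficient $F$ near $y=0$, since the flow map carries the exponential factor $e^{C\|y\|}$ and a crude bound is not square-integrable. The remedy is precisely the refined estimate from the first paragraph: writing $F(x,y)=\beta(x)y+\big(H(x,y)-x-\beta(x)y\big)$ pairs the leading linear term and the quadratic remainder with the finite moment $\int_{\|y\|<1}\|y\|^2\,\nu({\rm d}y)<\infty$, yielding $\int_{\|y\|<1}\|F(x,y)\|^2\,\nu({\rm d}y)<\infty$. The same expansion, together with $\|H(x_1,y)-H(x_2,y)-(x_1-x_2)\|\le C\|y\|\,e^{C\|y\|}\|x_1-x_2\|$ for $\|y\|<1$, certifies the Lipschitz-in-$x$ integral bound $\int_{\|y\|<1}\|F(x_1,y)-F(x_2,y)\|^2\,\nu({\rm d}y)\le C\|x_1-x_2\|^2$ needed to close the contraction.
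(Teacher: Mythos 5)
The paper itself gives no proof of Lemma~\ref{l1}; it is imported verbatim from \cite{Applebaum2009,Kurtz1995,Protter2004}, so your proposal has to be judged against the standard argument in those references. Your overall route is exactly that standard one: convert the Marcus equation into an It\^o--L\'evy SDE whose jump coefficient is the flow increment $F(x,y)=H(x,y)-x$, then apply It\^o--L\'evy existence/uniqueness theory. Your treatment of the jump part is essentially right and is the genuinely delicate point: the Gr\"onwall bound $\|H(x_1,y)-H(x_2,y)-(x_1-x_2)\|\le C\|y\|e^{C\|y\|}\|x_1-x_2\|$ and the quadratic remainder estimate are precisely what pair $F$ with $\int_{\|y\|<1}\|y\|^2\,\nu({\rm d}y)<\infty$. (A small misattribution: the remainder estimate $\|H(u,v)-u-\beta(u)v\|\le K(1+\|u\|)\|v\|^2$ needs only the Lipschitz property of $\beta$, via $H(u,v)-u-\beta(u)v=\int_0^1[\beta(\Psi(r))-\beta(u)]v\,{\rm d}r$; where the Lipschitz property of $\partial_x\beta$ genuinely enters is the drift, see below.)

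There are, however, two genuine gaps. First, a bookkeeping error that makes your displayed It\^o equation inequivalent to \eqref{e1}: when you compensate the small jumps, the compensator must reappear in the drift, so the correct drift is $\widetilde\alpha=\alpha+C+\beta b+\int_{\|y\|<1}\left[H(x,y)-x-\beta(x,t)y\right]\nu({\rm d}y)$, not $\alpha+C+\beta b$. This is fixable, since the added term converges exactly by your quadratic estimate. Second, and more seriously, the verification step claiming that $\widetilde\alpha$ is \emph{globally} Lipschitz ``directly from (i)--(ii), noting that $C$ is a bilinear expression in $\beta$ and $\partial_x\beta$'' is false. A product of globally Lipschitz functions need not be globally Lipschitz when a factor is unbounded: under (ii) the matrix $\partial_x\beta$ is bounded, but $\beta$ may grow linearly, so the Marcus correction \eqref{e8} is in general only \emph{locally} Lipschitz with linear growth. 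Concretely, for $d=n=1$, $\beta(x)=x-\tfrac12\cos x$ satisfies every hypothesis of the lemma, yet $C(x)=\tfrac12 A\,\beta(x)\beta'(x)$ has unbounded derivative (it contains a term of order $x\cos x$) and is not globally Lipschitz; the same issue affects the compensator term, whose local Lipschitz property is where the assumed Lipschitz continuity of $\partial_x\beta$ is actually needed. Consequently the global Picard/Banach contraction in your chosen norm does not close as written. The standard repair, and effectively what the cited references do, is to note that all transformed coefficients are locally Lipschitz with linear growth, run the fixed-point argument locally, and patch solutions via stopping times, using linear growth to exclude explosion; without this localization step your proof is incomplete.
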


\section{Existence and uniqueness for Marcus SDDE}
Consider the following Marcus SDDE, 
\begin{align}\label{e9}
\begin{split}
\begin{cases}
&{\rm d}X(t)=f(X(t),X(t-\tau)){\rm d}t+g(X(t), X(t-\tau))\diamond {\rm d}L(t), ~~{\rm for}~t>0,\\
&X(t)=\gamma(t),~~t\in[-\tau, 0],
\end{cases}
\end{split}
\end{align}
where $X(t)$ is a $\mathbb{R}^{d}$-valued stochastic process, $L(t)$ is a $\mathbb{R}^{n}$-valued L\'evy process defined on a probability space $(\Omega, \mathcal{F}, P)$, $\tau \in \mathbb{R}^{+}$ is time delay, $f: \mathbb{R}^{d} \times \mathbb{R}^{d} \rightarrow \mathbb{R}^{d}$, $g=(g_{ij})_{d\times n}: \mathbb{R}^{d}\times \mathbb{R}^{d} \rightarrow \mathcal{M}^{d\times n}$, and $\gamma: [-\tau, 0] \rightarrow \mathbb{R}^d$. 

The ``$\diamond$" in \eqref{e9}, as stated before, can be obtained by Wong-Zakai approximation. However, unlike in \eqref{e1}, there seems no neat expression for ``$\diamond$" in \eqref{e9} due to the delay $\tau$ appearing in the diffusion coefficient $g(X(t), X(t-\tau))$. We circumvent this problem by  associating \eqref{e9} with the following  Marcus SDE without delays,
		\begin{align}\label{e10}
\begin{cases}
{\rm d}X_1(t')=f\left(X_1(t'), \gamma(t'-\tau)\right) {\rm d}t'+g\left(X_1(t'), \gamma(t'-\tau) \right) \diamond {\rm d}L_1(t'),\\
{\rm d}X_2(t')=f\left(X_2(t'), X_1(t')\right) {\rm d}t'+g\left(X_2(t'), X_1(t') \right) \diamond {\rm d}L_2(t'),~~~~~~~~~~~~~~~{\rm for} ~t'\in (0, \tau],\\
~~\vdots~~~~~~~~~~~~~~~~~~~~~~~	\vdots~~~~~~~~~~~~~~~~~~~~~~~~~~~~	\vdots\\
{\rm d}X_k(t')=f\left(X_{k}(t'), X_{k-1}(t')\right) {\rm d}t'+g\left(X_{k}(t'), X_{k-1}(t') \right) \diamond {\rm d}L_k(t'),
\end{cases}
\end{align}
constrained by the condition that the initial value of $X_1(t')$ is prescribed, and  the finial value of $X_{i}(t')$ is set to be equal to  the initial value of $X_{i+1}(t')$ for $i=1,2,\cdots, k-1$, i.e.,
	\begin{equation}\label{e11}
X_1(0)=\gamma_0\quad \text{and} \quad X_{i}(\tau)=X_{i+1}(0).
\end{equation}

In \eqref{e10} and \eqref{e11}, $X_i(t') \in \mathbb{R}^{d} ~(i=1,2, \cdots, k)$ and $\gamma_{0}=\gamma(0)$ is a constant in $\mathbb{R}^d$. Note that condition \eqref{e11} is different from the conventional initial condition 
\begin{align}\label{e12}
	\left(X_1^{T}(0), X_2^T(0), \cdots, X_k^{T}(0)\right)^{T}=\boldsymbol{x}_0,
\end{align}
where $\boldsymbol{x}_0 \in \mathbb{R}^{kd}$ is a constant. Equation \eqref{e10} under condition \eqref{e12}, which is component-wise, can also be rewritten in form of  \eqref{e1}, but with higher dimensionality.

Equation  \eqref{e10} under the condition  \eqref{e11} can be converted from \eqref{e9} in the way given below.  For each solution to Marcus SDDE \eqref{e9}, we can obtain a solution to \eqref{e10} and \eqref{e11} by constructing $X_i(t')=X(t'+(i-1)\tau)$, $L_i(t')=L(t'+(i-1)\tau)-L((i-1)\tau)$ for $i=1,2,\cdots,k$. It is straightforward to check that the path of $X_i(t')$ with $t'\in(0, \tau]$ coincides with the path of $X(t)$ with $t'\in((i-1)\tau, i\tau]$.  Therefore, Marcus SDDE \eqref{e9} is equivalent to Marcus SDE \eqref{e10} under condition \eqref{e11} in the following sense,
\begin{align}\label{e13}
X(t)\overset{a.s.}=
\begin{cases}
X_1(t), \quad & {\rm for} \quad t\in (0,\tau],\\ 
X_2(t-\tau), \quad & {\rm for} \quad t\in (\tau, 2\tau],\\
~~\vdots ~~~~~~~~~~~~~~~~~~~~~~&~\vdots\\
X_{k}(t-(k-1)\tau), \quad & {\rm for} \quad t\in ((k-1)\tau,k\tau],
\end{cases}
\end{align}
or equivalently
\begin{align}\label{e14}
X_{i}(t')\overset{\text{a.s.}}{=}X(t'+(i-1)\tau) \quad {\rm for} \quad t'\in(0,\tau]  \quad {\rm and}  \quad i=1,2,\cdots,k.
\end{align}

The solution to Marcus SDDE \eqref{e9} can be interpreted by Marcus SDE \eqref{e10} under condition \eqref{e11}. We are now ready to establish the existence and uniqueness for the solution to \eqref{e9}. 

We first  introduce some notations that will be used later. Denote $\Phi_k(\gamma_0,t')$, with $\Phi_k: \mathbb{R}^d\times [0, \tau] \rightarrow \mathbb{R}^{kd}, $
$\left(\gamma_{0}, t'\right)\rightarrow \Phi_k(\gamma_{0},t')=\left(X_1^T(t'), X_2^T(t'),\cdots, X_k^T(t')\right)^T$, as the solution at time $t'$ to \eqref{e10} under condition \eqref{e11}. Examining the recursive structure of \eqref{e10} and \eqref{e11}, it is straightforward to check that for $k\geq 2$, $\Phi_{k-1}(\gamma_{0},t')$ coincides with the first $k-1$ components of $\Phi_{k}(\gamma_{0},t')$.

\begin{assumption}[H1]
	Suppose that $\gamma(t)$ is continuous on $[-\tau, 0]$, $f(x,y), g(x,y), \frac{\partial }{\partial x}g(x,y)$ and $\frac{\partial}{\partial y} g(x,y)$ are globally Lipschitz, i.e., $\forall x_1, x_2, y_1, y_2 \in \mathbb{R}^{d}$
	\begin{align}
		\begin{split}
		&\| f(x_1, y_1)-f(x_2,y_2)\|+ \| g(x_1, y_1)-g(x_2,y_2)\|+\bigg\|\frac{\partial }{\partial x}g(x_1,y_1)-\frac{\partial }{\partial x}g(x_2,y_2)\bigg\|+\bigg\|\frac{\partial }{\partial y}g(x_1,y_1)-\frac{\partial }{\partial y}g(x_2,y_2)\bigg\| \\
		&\leq C_1\| x_1-x_2\|+C_2\|y_1-y_2\|.
		\end{split}
	\end{align} 

\end{assumption}

\begin{theorem}
	Under Assumption {\rm (}H1{\rm )}, there exists a unique strong solution to Marcus SDDE \eqref{e9}.
\end{theorem}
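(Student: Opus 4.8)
The strategy is to leverage the equivalence between the Marcus SDDE \eqref{e9} and the system \eqref{e10} under condition \eqref{e11}, which the discussion preceding the theorem has already established at the level of paths. Since the solution to \eqref{e9} on $(0,k\tau]$ is determined (almost surely) by the solution to \eqref{e10}--\eqref{e11} via \eqref{e13}, it suffices to prove existence and uniqueness for the system \eqref{e10} under the recursive constraint \eqref{e11}. I would prove this by induction on $k$, building the solution one delay interval at a time, because the constraint \eqref{e11} couples the equations in a strictly triangular (feed-forward) manner: the input $\gamma(t'-\tau)$ to the first equation is prescribed data, and the input to the $i$-th equation is the previously solved process $X_{i-1}(t')$.

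For the base case $k=1$, the first equation in \eqref{e10} is
\begin{align}
{\rm d}X_1(t')=f\bigl(X_1(t'),\gamma(t'-\tau)\bigr)\,{\rm d}t'+g\bigl(X_1(t'),\gamma(t'-\tau)\bigr)\diamond{\rm d}L_1(t'),\qquad X_1(0)=\gamma_0,
\end{align}
on $t'\in(0,\tau]$. This is exactly of the form \eqref{e1} with $\alpha(x,t')=f(x,\gamma(t'-\tau))$ and $\beta(x,t')=g(x,\gamma(t'-\tau))$. I would verify that these coefficients meet the hypotheses of Lemma \ref{l1}: continuity in $t'$ follows from the assumed continuity of $\gamma$ on $[-\tau,0]$ (so $t'\mapsto\gamma(t'-\tau)$ is continuous on $(0,\tau]$), and the global Lipschitz property in $x$ for $\alpha$, $\beta$ and $\frac{\partial}{\partial x}\beta$ follows directly from Assumption (H1) by fixing the second argument: with $y_1=y_2=\gamma(t'-\tau)$ the $C_2$-term drops out, leaving the required bound with constant $C_1$. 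Lemma \ref{l1} then yields a unique strong solution $X_1$ on $(0,\tau]$, and hence the well-defined terminal value $X_1(\tau)$.

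For the inductive step, suppose $X_1,\dots,X_{i-1}$ have been uniquely determined on $(0,\tau]$. The $i$-th equation reads
\begin{align}
{\rm d}X_i(t')=f\bigl(X_i(t'),X_{i-1}(t')\bigr)\,{\rm d}t'+g\bigl(X_i(t'),X_{i-1}(t')\bigr)\diamond{\rm d}L_i(t'),\qquad X_i(0)=X_{i-1}(\tau),
\end{align}
where now the second slot is occupied by the already-constructed process $X_{i-1}(t')$ and the initial condition $X_i(0)=X_{i-1}(\tau)$ is fixed by \eqref{e11}. Setting $\alpha(x,t')=f(x,X_{i-1}(t'))$ and $\beta(x,t')=g(x,X_{i-1}(t'))$, the Lipschitz-in-$x$ bounds again come from (H1) with the second argument frozen at each $t'$, and continuity in $t'$ follows from the (a.s.) continuity of the paths of $X_{i-1}$ on the closed interval. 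Applying Lemma \ref{l1} once more produces a unique strong $X_i$ on $(0,\tau]$. After $k$ steps the full vector $\Phi_k(\gamma_0,t')$ is uniquely determined, and reassembling via \eqref{e13} gives a unique strong solution $X(t)$ to \eqref{e9} on $(0,k\tau]$; letting $k\to\infty$ extends it to all $t>0$.

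The main obstacle is not any single estimate but justifying that the hypotheses of Lemma \ref{l1} are genuinely met at each inductive stage, in particular the continuity in $t'$ of the coefficients. This requires knowing that the solution paths $X_{i-1}$ are continuous (or at least càdlàg with controlled jumps) on the closed interval $[0,\tau]$, so that $t'\mapsto f(x,X_{i-1}(t'))$ and $t'\mapsto g(x,X_{i-1}(t'))$ inherit the regularity demanded by hypothesis (i) of Lemma \ref{l1}; one should check that the Lemma's continuity condition is compatible with the jump discontinuities that a Lévy-driven solution path necessarily has, and that freezing the second argument along a càdlàg path does not spoil the measurability and adaptedness needed for \eqref{e1} to make sense. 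A secondary point worth a sentence is confirming that the terminal value $X_{i-1}(\tau)$ used as the next initial datum is a well-defined $\mathcal F_\tau$-measurable random variable, so that each sub-problem is a bona fide Marcus SDE with a (random but $\mathcal F_0^{(i)}$-measurable) initial condition.
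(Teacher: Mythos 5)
Your overall strategy---reduce to the system \eqref{e10} under the constraint \eqref{e11} via the identification \eqref{e13}, then induct on the number of delay intervals---is exactly the paper's, and your base case $k=1$ coincides with the paper's. But your inductive step has a genuine gap. You solve the $i$-th equation separately, freezing the previously constructed process into the coefficients, i.e.\ setting $\alpha(x,t')=f\bigl(x,X_{i-1}(t')\bigr)$ and $\beta(x,t')=g\bigl(x,X_{i-1}(t')\bigr)$, and then invoking Lemma \ref{l1}. This fails on two counts. First, Lemma \ref{l1} is stated for deterministic coefficients and for a deterministic initial point $z_0\in\mathbb{R}^d$ in \eqref{e1}, whereas your frozen coefficients are random (they depend on $\omega$ through the path of $X_{i-1}$) and your initial datum $X_i(0)=X_{i-1}(\tau)$ is a random variable. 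Second, and more seriously, hypothesis (i) of Lemma \ref{l1} demands continuity of the coefficients in $t'$, and your claim that this ``follows from the (a.s.) continuity of the paths of $X_{i-1}$'' is false: $X_{i-1}$ solves a Marcus SDE driven by a L\'evy process, so its paths are only c\`adl\`ag and in general have jumps. You flag this obstacle yourself in your final paragraph, but you leave it unresolved, and it cannot be resolved within Lemma \ref{l1} as stated.

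The paper's inductive step is designed precisely to avoid this trap. It uses the inductive hypothesis only to pin down the initial value: uniqueness for the $i$-component system fixes $\left(X_1^T(\tau),\dots,X_i^T(\tau)\right)^T=\Phi_i(\gamma_0,\tau)$, so the recursive constraint \eqref{e11} can be replaced by the conventional initial condition \eqref{e19}. Then all $i+1$ equations are stacked into a single Marcus SDE \eqref{e1_21} on $\mathbb{R}^{(i+1)d}$, in which the earlier components enter as part of the state vector $\boldsymbol{X}(t')$ rather than as a time-dependent input: the stacked drift $F_{i+1}$ and the block-diagonal diffusion $G_{i+1}$ are deterministic functions of $(\boldsymbol{X},t')$, globally Lipschitz in $\boldsymbol{X}$ by Assumption (H1), and their only explicit time dependence is through the continuous deterministic function $\gamma(t'-\tau)$ appearing in the first block. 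Lemma \ref{l1} then applies to the whole system at once. (To be fair, the random-initial-value issue is also present in the paper, which calls $\Phi_i(\gamma_0,\tau)$ a ``fixed value'' although it is a random variable; but the continuity-in-time failure is specific to your route.) To salvage your one-equation-at-a-time scheme you would need a strengthening of Lemma \ref{l1} covering random, merely c\`adl\`ag-in-time coefficients and random initial data; the paper's state-augmentation maneuver is exactly what makes the argument go through with the lemma as given.
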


\begin{proof}
According to \eqref{e13} or \eqref{e14}, we only need to show that for any given $k\in \mathbb{N}$, there is a unique strong solution to \eqref{e10} under condition \eqref{e11}.  We shall finish the proof by induction.

First, we agrue that the conclusion is true for $k=1$.	In fact, for $k=1$, Marcus SDE \eqref{e10} and \eqref{e11} becomes
	\begin{align}\label{e15}
	\begin{cases}
		{\rm d}X_1(t')={F_1\left(X_1(t'), t'\right)} {\rm d}t'+{G_1\left(X_1(t'), t' \right)} \diamond {\rm d}L_1(t'),~~~t'\in (0,\tau],\\
		X_1(0)=\gamma_0,
		\end{cases}
	\end{align}
	where ${F_1\left(X_1(t'), t'\right)}=f(X_1(t'), \gamma(t'-\tau))$, $G_1\left(X_1(t'), t'\right)=g(X_1(t'), \gamma(t'-\tau))$. It is straightforward to check that {$F_1$, $G_1$} satisfy the conditions in Lemma 1, therefore, \eqref{e15} has a unique strong solution by Lemma 1.

Suppose that the conclusion is true for $k=i$, i.e.,  Marcus SDE
		\begin{align}\label{e16}
\begin{cases}
{\rm d}X_1(t')=f\left(X_1(t'), \gamma(t'-\tau)\right) {\rm d}t'+g\left(X_1(t'), \gamma(t'-\tau) \right) \diamond {\rm d}L_1(t'),\\
{\rm d}X_2(t')=f\left(X_2(t'), X_1(t')\right) {\rm d}t'+g\left(X_2(t'), X_1(t') \right) \diamond {\rm d}L_2(t'),\\
~~\vdots~~~~~~~~~~~~~~~~~~~	\vdots~~~~~~~~~~~~~~~~~~~~~~~~~~~	\vdots\\
{\rm d}X_i(t')=f\left(X_{i}(t'), X_{i-1}(t')\right) {\rm d}t'+g\left(X_{i}(t'), X_{i-1}(t') \right) \diamond {\rm d}L_i(t'),\\
~~X_1(0)=\gamma_0,\,X_2(0)=X_1(\tau),\cdots, \,X_{i}(0)=X_{i-1}(\tau),
\end{cases}
\end{align}
has a unique strong solution
$\left(X_1^T(t'), X_2^{T}(t'), \cdots, X_i^{T}(t')\right)^T.$
 
  It remains to show that the conclusion is true for $k=i+1$, i.e., there is a unique strong solution to Marcus SDE 
\begin{align}\label{e17}
\begin{cases}
{\rm d}X_1(t')=f\left(X_1(t'), \gamma(t'-\tau)\right) {\rm d}t'+g\left(X_1(t'), \gamma(t'-\tau) \right) \diamond {\rm d}L_1(t'),\\
{\rm d}X_2(t')=f\left(X_2(t'), X_1(t')\right) {\rm d}t'+g\left(X_2(t'), X_1(t') \right) \diamond {\rm d}L_2(t'),\\
~~\vdots~~~~~~~~~~~~~~~~~~~	\vdots~~~~~~~~~~~~~~~~~~~~~~~~~~~	\vdots\\
{\rm d}X_i(t')=f\left(X_{i}(t'), X_{i-1}(t')\right) {\rm d}t'+g\left(X_{i}(t'), X_{i-1}(t') \right) \diamond {\rm d}L_i(t'),\\
{\rm d}X_{i+1}(t')=f\left(X_{i+1}(t'), X_{i}(t')\right) {\rm d}t'+g\left(X_{i+1}(t'), X_{i}(t') \right) \diamond {\rm d}L_{i+1}(t'),\\
X_1(0)=\gamma_0,\, X_2(0)=X_1(\tau),\cdots, \,X_{i}(0)=X_{i-1}(\tau), \,X_{i+1}(0)=X_{i}(\tau).
\end{cases}
\end{align}

Note that the existence and uniqueness  for \eqref{e16} implies that  the solution $\left(X_1^T(t'), X_2^T(t'),\cdots,X_i^T(t')\right)^T$ at $t'=\tau$ is a fixed value equal to $\Phi_i(\gamma_0, \tau)$, i.e.,
\begin{align}\label{eq2_22}
	\begin{split}
\left(X_1^T(\tau), X_2^T(\tau), \cdots, X_i^{T}(\tau)\right)^{T}=\Phi_i(\gamma_0, \tau).
	\end{split}
\end{align}
  With \eqref{eq2_22}, equation \eqref{e17} can be written as 
\begin{align}\label{e18}
\begin{cases}
{\rm d}X_1(t')=f\left(X_1(t'), \gamma(t'-\tau)\right) {\rm d}t'+g\left(X_1(t'), \gamma(t'-\tau) \right) \diamond {\rm d}L_1(t'),\\
{\rm d}X_2(t')=f\left(X_2(t'), X_1(t')\right) {\rm d}t'+g\left(X_2(t'), X_1(t') \right) \diamond {\rm d}L_2(t'),\\
~~\vdots~~~~~~~~~~~~~~~~~~~	\vdots~~~~~~~~~~~~~~~~~~~~~~~~~~~	\vdots\\
{\rm d}X_i(t')=f\left(X_{i}(t'), X_{i-1}(t')\right) {\rm d}t'+g\left(X_{i}(t'), X_{i-1}(t') \right) \diamond {\rm d}L_i(t'),\\
{\rm d}X_{i+1}(t')=f\left(X_{i+1}(t'), X_{i}(t')\right) {\rm d}t'+g\left(X_{i+1}(t'), X_{i}(t') \right) \diamond {\rm d}L_{i+1}(t'),
\end{cases}
\end{align}
with initial condition
\begin{align}\label{e19}
\begin{split}
\left(X_1^{T}(0), X_2^T(0),\cdots, X_{i}^T(0), X_{i+1}^T(0)\right)^{T}=\left(\gamma_{0}^{T}, \Phi_{i}^{T}(\gamma_{0},\tau)\right)^{T}.
\end{split}
\end{align}

Equations \eqref{e18} and \eqref{e19} can be rewritten in form of \eqref{e1} as 
\begin{align}\label{e1_21}
\begin{split}
\begin{cases}
	\boldsymbol{X}(t') ={F_{i+1}\left(\boldsymbol{X}(t'),t'\right)}{\rm d}t'+{G_{i+1}\left(\boldsymbol{X}(t'),t'\right)}\diamond {\rm d}\boldsymbol{L}(t'),\\
	\boldsymbol{X}(0)=\boldsymbol{x}_0,
\end{cases}
\end{split}
\end{align}
where 
\begin{align*}
\small
	\begin{split}
		\boldsymbol{X}(t')=\left(                 
		\begin{array}{ccc}   
		X_1(t') \\  
		X_2(t')\\  
		\vdots\\
		X_{i}(t')\\
		X_{i+1}(t') 
		\end{array}
		\right),~{F_{i+1}(\boldsymbol{X}(t'),t')}=\left(                 
		\begin{array}{ccc}   
		f(X_1(t'), \gamma(t'-\tau)) \\  
		f(X_{2}(t'), X_{1}(t')\\  
		\vdots\\
		f(X_{i}(t'), X_{i-1}(t')\\
		f(X_{i+1}(t'), X_{i}(t') 
		\end{array}
		\right),~\boldsymbol{L}(t')=\left(                 
		\begin{array}{ccc}   
		L_1(t') \\  
		L_2(t')\\  
		\vdots\\
		L_{i}(t')\\
		L_{i+1}(t') 
		\end{array}
		\right),~\boldsymbol{x}_0=\left(                 
		\begin{array}{ccc}   
		\gamma_{0} \\  
		\Phi_i(\gamma_{0},\tau)
		\end{array}
		\right),
	\end{split}
\end{align*}
and 
\begin{align*}
	\begin{split}
	{G_{i+1}(\boldsymbol{X}(t'),t')}=\left(                 
	\begin{array}{ccccc}   
	g(X_1(t'), \gamma(t'-\tau)) &  && &\\  
	& g(X_2(t'),X_1(t')) & &&\\  
	&&\ddots	&&\\
	& &&g(X_{i}(t'),X_{i-1}(t')) &\\
	& &&&g(X_{i+1}(t'),X_{i}(t')) \\
	\end{array}	
	\right).
	\end{split}
\end{align*}
It is straightforward to show that {$F_{i+1}, G_{i+1}$} satisfy the conditions in Lemma 1, therefore, \eqref{e1_21} has a unique strong solution by Lemma 1. 
\end{proof}

\section{Representation formula for the density of Marcus SDDE}
\begin{definition}\label{Df2}
	For $k \in \mathbb{N}$, define $\mathcal{Q}_k${\rm :} $\mathbb{R}^{kd}\times [0, \tau]\times \mathbb{R}^{kd}\times [0,\tau]\rightarrow[0, \infty)$, $(\boldsymbol{u},t',\boldsymbol{v},s)\rightarrow \mathcal{Q}_{k}(\boldsymbol{u};t'|\boldsymbol{v};s)$ such that $\forall \boldsymbol{u}, \boldsymbol{v} \in \mathbb{R}^{kd}$ and $0\leq s<t'\leq \tau$, $\mathcal{Q}_k(\boldsymbol{u};t'|\boldsymbol{v};s)$ represents the probability density for the solution $\left( X_1^{T}(t'),X_2^T(t'), \cdots, X_k^T(t')\right)^T$ to Marcus SDE \eqref{e10} at $\boldsymbol{u}$ under the condition $\left( X_1^{T}(s),X_2^T(s), \cdots, X_k^T(s)\right)^T=\boldsymbol{v}$.
\end{definition}

The following three notations are used in this paper to represent probability densities.
\begin{itemize} 
	\item[{\rm (i)}] $\mathcal{P}_{\mathcal{A}}$: the density for the solution $X(t)$ defined in Marcus SDDE \eqref{e9}. For example, $\mathcal{P}_{\mathcal{A}}(x,t)$ represents the density for $X(t)$ at $X(t)=x$. $\mathcal{P}_{\mathcal{A}}(x,3\tau| y, \tau; z, 2\tau)$ represents the conditional density for $X(3\tau)$ at $X(3\tau)=x$ given $X(\tau)=y$ and $X(2\tau)=z$.
	\item[{\rm(ii)}] $\mathcal{Q}_k$: as given in Definition \ref{Df2}, $\mathcal{Q}_k$ is the transitional density of the $\mathbb{R}^{kd}$-valued solution $\left(X_1^T(t'),X_2^T(t'),\cdots,\right.$\\
	$\left.X_k^T(t')\right)^T$ to Marcus SDE \eqref{e10}. For example, $\mathcal{Q}_2(x,y;t'|w,z;s)$ with $0\leq s <t' \leq \tau$ represents the density of $\left(X_1^T(t'),X_2^T(t') \right)^T$ at $X_1(t')=x$ and $X_2(t')=y$ given $X_1(s)=w$ and $X_2(s)=z$.
	\item[{\rm(iii)}] $p$: the density in general cases. For example, $p(X=x;Y=y)$ represents the density of $(X,Y)$ at $X=x$ and $Y=y$; $p(X=x;Y=y|Z=z)$ represents the density of $(X,Y)$ at $X=x$ and $Y=y$ given $Z=z$. Note that $\mathcal{P}_{\mathcal{A}}$ and $\mathcal{Q}_{k}$ can be expressed in terms of $p$. For instance, 
	\begin{align}\label{e20}
	\mathcal{P}_{\mathcal{A}}(x,t)&=p\left(X(t)=x|X(0 \right)=\gamma_{0}),\\
	\mathcal{P}_{\mathcal{A}}(x,3\tau| y, \tau; z, 2\tau)&=p(X(3\tau)=x|X(0)=\gamma_{0}; X(\tau)=y;X(2\tau)=z),\\
	\mathcal{Q}_2(x,y;t'|u,v;s)&=p(X_1(t')=x;X_2(t')=y|X_1(0)=\gamma_0; X_1(s)=u;X_2(s)=v).
	\end{align}
\end{itemize}

To proceed, we need the following Assumptions. 

\begin{assumption}[H2]
	Suppose $\forall \boldsymbol{x}_0 \in \mathbb{R}^{kd}$ with $k \in \mathbb{N}$, the Marcus SDE defined by {\rm (\ref{e10})} under initial condition \eqref{e12} has a unique strong solution.
\end{assumption} 

\begin{assumption}[H3]
	Suppose $\forall \boldsymbol{u, v} \in \mathbb{R}^{kd}$ with $k\in \mathbb{N}$ and $0\leq s<t' \leq \tau$, the probability density  $\mathcal{Q}_{k}(\boldsymbol{u},t'|\boldsymbol{v},s)$, which represents the density of $\boldsymbol{X}(t')$ at $\boldsymbol{u}$ given $\boldsymbol{X}(s)=\boldsymbol{v}$, exists and is strictly positive.
\end{assumption}

Lemma 1 gives a sufficient condition for Assumption (H2) to be true. As for Assumption (H3), there exist sufficient conditions for the existence and regularity of the probability density for some SDEs driven by L\'evy processes with certain restrictions imposed on the jumping measures, see \cite{Applebaum2009,Bichteler1987} and the references therein, among others. However, sufficient conditions for SDEs driven by general L\'evy processes are still not available. There are also some sufficient conditions available for the density to be strictly positive in cases with Gaussian white noise. The strictly positive property of the density for a general class of SDEs can be concluded from the heat kernel estimations \cite{Aronson1968,Davies1989}. A more general sufficient condition for strictive positiveness of density is presented in \cite{Bogachev2009}.

Now, we use the transition density $\mathcal{Q}_{k}$ of Marcus SDE \eqref{e10} to represent the density $\mathcal{P}_{\mathcal{A}}$ of Marcus SDDE \eqref{e9}. 

\begin{theorem}\label{Th2}{\rm \textbf{[Representation formula for the density of Marcus SDDE]}} Suppose that Assumptions {\rm (}H2{\rm )} and  {\rm (}H3{\rm )}  hold. Then $\forall t >0$, the probability density function $\mathcal{P}_{\mathcal{A}}(x,t)$ for the solution $X(t)$ defined by Marcus SDDE {\rm (\ref{e9})} exists. Moreover, $\forall x\in \mathbb{R}^{d}$, the following statements are true.
	\begin{itemize} 
		\item[{\rm (i)}] $For$ $ t\in (0,\tau]$,
		\begin{equation}\label{e23}
		\mathcal{P}_{\mathcal{A}}(x,t)=\mathcal{Q}_1(x;t \big|\gamma_{0};0).
		\end{equation}
		\item[{\rm(ii)}] $For$ $t\in ((k-1)\tau, k\tau)$ $with$ $k\geq 2$ $and$ $k\in {\mathbb{N}}$,
		\begin{equation}\label{e24}
		\begin{split}
		\mathcal{P}_{\mathcal{A}}(x,t)&=\int_{\mathbb{R}^{2 (k-1) d}} \mathcal{Q}_{k-1}\left( x_1, \cdots,x_{k-1};\tau \big|y_1,\cdots,y_{k-1}; t-(k-1)\tau\right)\\
		&\times \mathcal{Q}_{k}\left( y_1, \cdots,y_{k-1}, x;t-(k-1)\tau \big|\gamma_0,x_1,\cdots,x_{k-1}; 0\right)\prod_{i=1}^{k-1}{\rm d}x_i\prod_{i=1}^{k-1}{\rm d}y_i.
		\end{split}
		\end{equation}
\item[{\rm(iii)}] 
$For$ $ t =k\tau$ with $k\geq 2$ and $k\in \mathbb{N}$,
	\begin{equation}\label{e25}
	\mathcal{P}_{\mathcal{A}}(x,t)=\int_{\mathbb{R}^{(k-1) d}}\mathcal{Q}_k(x_1,\cdots, x_{k-1},x;\tau\big|\gamma_{0}, x_1,\cdots,x_{k-1};0)\prod_{i=1}^{k-1}{\rm d}x_i.
	\end{equation}
	\end{itemize}
	
\end{theorem}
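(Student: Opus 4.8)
The plan is to exploit the pathwise equivalence between the Marcus SDDE \eqref{e9} and the coupled, delay-free Marcus SDE \eqref{e10}--\eqref{e11} established in Section 3. Recall that the components are constructed by $X_i(t')=X(t'+(i-1)\tau)$ and are driven by $L_i(t')=L(t'+(i-1)\tau)-L((i-1)\tau)$; since these are increments of the L\'evy process $L$ over the disjoint intervals $((i-1)\tau,i\tau]$, the processes $L_1,\dots,L_k$ are \emph{independent}. Under (H2) the coupled system with a conventional (prescribed) initial datum \eqref{e12} has a unique strong solution and is therefore a Markov process, and under (H3) its transition density $\mathcal{Q}_k$ exists and is strictly positive. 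Combining this with the a.s.\ identity \eqref{e13} gives existence of $\mathcal{P}_{\mathcal{A}}(x,t)$, and for $t\in(0,\tau]$ yields part (i): since \eqref{e13} gives $X(t)=X_1(t)$ a.s.\ with $X_1(0)=\gamma_0$, Definition \ref{Df2} identifies $\mathcal{P}_{\mathcal{A}}(x,t)$ with $\mathcal{Q}_1(x;t\,|\,\gamma_0;0)$, which is \eqref{e23}.

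The structural fact driving the remaining two parts is the lower-triangular (feed-forward) coupling in \eqref{e10}: the dynamics of $X_j$ involve only $X_{j-1}$, so for every $j$ the truncated vector $(X_1,\dots,X_j)$ is itself an autonomous Markov process with transition density $\mathcal{Q}_j$, consistent with the observation that $\Phi_{j}$ reproduces the first $j$ components of $\Phi_k$. Moreover, because $L_1,\dots,L_k$ are independent, conditioning on the entire path of the lower components leaves the law of the noise driving the higher ones unchanged. I would package these two facts into a single factorization lemma and use it repeatedly.

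For part (iii) I would introduce the grid values $\xi_i:=X(i\tau)=X_i(\tau)=X_{i+1}(0)$ for $i=1,\dots,k-1$ and $\xi_k:=X(k\tau)=X_k(\tau)$. The goal is the claim that the joint density of $(\xi_1,\dots,\xi_{k-1},\xi_k)$ at $(x_1,\dots,x_{k-1},x)$ equals $\mathcal{Q}_k(x_1,\dots,x_{k-1},x;\tau\,|\,\gamma_0,x_1,\dots,x_{k-1};0)$. The point is that the constraint \eqref{e11}, namely $X_{i+1}(0)=X_i(\tau)=x_i$, is realized precisely on the ``diagonal'' where the $i$-th \emph{terminal} slot of $\mathcal{Q}_k$ and the $(i+1)$-th \emph{initial} slot are set to the common value $x_i$; the Markov property together with the independence just described shows that evaluating $\mathcal{Q}_k$ on this diagonal reproduces the conditional law carried by the SDDE chaining. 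Marginalizing over $x_1,\dots,x_{k-1}$ then yields \eqref{e25}. For part (ii), with $t'=t-(k-1)\tau\in(0,\tau)$ and $X(t)=X_k(t')$, I would insert the intermediate block-time $t'$ and apply Chapman--Kolmogorov to the autonomous subsystem $(X_1,\dots,X_{k-1})$: the factor $\mathcal{Q}_k(y_1,\dots,y_{k-1},x;t'\,|\,\gamma_0,x_1,\dots,x_{k-1};0)$ propagates all $k$ components on $[0,t']$, while $\mathcal{Q}_{k-1}(x_1,\dots,x_{k-1};\tau\,|\,y_1,\dots,y_{k-1};t')$ carries the lower $k-1$ components on $[t',\tau]$ up to the terminal values $x_i=X_i(\tau)$ that feed the initial data of the higher blocks; sharing the variables $x_i$ between the two factors enforces \eqref{e11}, and integrating over the $y_i$ and $x_i$ produces \eqref{e24}.

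The main obstacle I anticipate is the \emph{circular} nature of the coupling \eqref{e11}: the initial datum of each block equals the \emph{terminal} value of the previous block, which lies at a later block-time than the instant $t'$ at which the final component is evaluated, and each component depends on the whole path of the one below it. Making rigorous the ``gluing via shared integration variables''---that is, proving that evaluating the transition density on the diagonal faithfully enforces the constraint and does not distort the conditional law---is the technical heart. I expect the resolution to rest on exactly the two ingredients isolated above: the independence of the disjoint L\'evy increments $L_1,\dots,L_k$, which decouples the conditional laws across blocks, and the feed-forward triangular structure, which keeps every truncated subsystem Markov and justifies the Chapman--Kolmogorov split.
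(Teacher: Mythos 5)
Your overall strategy is the same as the paper's: identify the SDDE \eqref{e9} with the constrained block system \eqref{e10}--\eqref{e11}, work with the grid values $X(i\tau)=X_i(\tau)=X_{i+1}(0)$, show that their joint density is $\mathcal{Q}_k$ evaluated on the ``diagonal,'' and obtain part (ii) by splitting at the block time $t'=t-(k-1)\tau$. Part (i) is complete and matches the paper, and your structural observations --- independence of the $L_i$ as increments of $L$ over disjoint intervals, and the feed-forward triangular structure making each truncated vector $\left(X_1^T,\dots,X_j^T\right)^T$ an autonomous Markov system with transition density $\mathcal{Q}_j$ --- are exactly the facts the paper's argument relies on.

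The gap is that you stop precisely at what you yourself call the technical heart: you assert that evaluating $\mathcal{Q}_k$ on the diagonal reproduces the law carried by the SDDE chaining, and that parts (ii)--(iii) then follow by ``Chapman--Kolmogorov with shared variables,'' but you only express the expectation that your two structural ingredients suffice, without giving the argument. Note that the constrained system \eqref{e10}--\eqref{e11} is \emph{not} a Markov process started from a deterministic point: its initial datum $\left(\gamma_0^T, X_1^T(\tau),\dots,X_{k-1}^T(\tau)\right)^T$ is random and anticipates the future of the lower blocks, and conditioning on the grid values does not merely pin initial positions --- it also \emph{bridges} the lower components on $[0,\tau]$. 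So Chapman--Kolmogorov cannot be invoked directly. The paper's resolution is explicit Bayes bookkeeping: (a) the grid density is computed by the chain rule as a telescoping product of ratios, each factor being the conditional density of $X(j\tau)$ given the earlier grid values, which equals $\mathcal{Q}_j/\mathcal{Q}_{j-1}$ on the diagonal (equations \eqref{e27}--\eqref{e28}, Appendix A.2--A.3), where the feed-forward independence is what collapses the denominator from a $j$-block to a $(j-1)$-block density; (b) for $t$ strictly inside a block, the paper conditions on \emph{all} grid values including the future one $X(k\tau)=x_k$, computes the resulting bridge density of the full vector at block time $t'$ by Bayes within the Markov system under the conventional initial condition \eqref{e12} (equation \eqref{e26}), and then integrates $x_k$ out via \eqref{e1_29}; the normalizing denominators cancel against the grid density, and only after this cancellation does \eqref{e24} acquire its clean Chapman--Kolmogorov appearance. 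Your proposed ``factorization lemma'' would have to contain exactly these computations --- in particular the device of conditioning on the future grid point, which your plan never mentions --- so what you have is a correct blueprint that matches the paper's route, but not yet a proof.
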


\begin{remark}
	Theorem 2 shows that the density for Marcus SDDE  can be expressed in terms of that for Marcus SDEs without delays.  Governing equations for density of Marcus SDEs without delays  have been established,  see \cite{Sun2017}.
\end{remark}

\begin{remark}
	Note that \eqref{e25} can be absorbed into  \eqref{e24}. In fact, $\forall \boldsymbol{u, v} \in \mathbb{R}^{kd}$, $	\mathcal{Q}_{k}(\boldsymbol{u},s|\boldsymbol{v},s)=\underset{t'\rightarrow s}{{\rm lim}} \mathcal{Q}_{k}(\boldsymbol{u},t'|\boldsymbol{v},s)=\delta(\boldsymbol{u}-\boldsymbol{v})$ and  $f(\boldsymbol{u})=\int_{\mathbb{R}^{kd}}\delta(\boldsymbol{u}-\boldsymbol{v})f(\boldsymbol{v}){\rm d}\boldsymbol{v}.$ Therefore, for $t=k\tau$, equation \eqref{e24} becomes to  \eqref{e25}.
	
\end{remark}

\begin{proof}
First, we prove the statement (i) of Theorem \ref{Th2}. In fact, by equation \eqref{e13} or \eqref{e14}, the density of $X(t)$ for $0<t\leq \tau$ defined by Marcus SDDE (\ref{e9}) is the  same as the density of {$X_1(t)$} defined by Marcus SDE (\ref{e10}) with initial value $\gamma_0 \in \mathbb{R}^d$ and $k=1$. Therefore, (i) is true.

Next, we prove the statement (ii) of Theorem \ref{Th2}.~For $t\in ((k-1)\tau, k\tau)$ with $k\geq 2$ and $k\in \mathbb{N}$, as shown  in Appendix,  if Assumption (H3) holds (i.e., $\mathcal{Q}_k$ exists),  then $\mathcal{P}_{\mathcal{A}}(x, t |$ $x_1,\tau;\cdots;x_{k-1},(k-1)\tau;x_k,k\tau)$, $\mathcal{P}_{\mathcal{A}}(x_{k},k\tau|x_1,\tau;\cdots; x_{k-1},(k-1)\tau)$  and $\mathcal{P}_{\mathcal{A}}(x_1,\tau;\cdots;x_{k-1},(k-1)\tau)$ exist, and can be respectively expressed as
\begin{align}\label{e26}
\begin{split}
&\mathcal{P}_{\mathcal{A}}(x,  t |x_1,\tau;\cdots;x_{k-1},(k-1)\tau;x_k,k\tau)\\
=&\int_{\mathbb{R}^{(k-1)d}}\mathcal{Q}_{k}(x_1,\cdots,x_{k-1},x_{k};\tau|y_1,\cdots,y_{k-1},x;t-(k-1)\tau)\\
 &~~~~~~~~\times\dfrac{\mathcal{Q}_{k}(y_1,\cdots,y_{k-1},x;t-(k-1)\tau|\gamma_{0},x_1,\cdots,x_{k-1}; 0) }{\mathcal{Q}_{k}(x_1,\cdots,x_{k};\tau|\gamma_{0},x_1,\cdots,x_{k-1};0)}\prod_{i=1}^{k-1}{\rm d}y_i,
\end{split}
\end{align}
\begin{align}\label{e27}
\mathcal{P}_{\mathcal{A}}(x_{k},k\tau|x_1,\tau; \cdots; x_{k-1},(k-1)\tau)
	=\dfrac{\mathcal{Q}_{k}(x_1,\cdots,x_k; \tau|\gamma_{0},x_1,\cdots,x_{k-1};0)}{\mathcal{Q}_{k-1}(x_1,\cdots,x_{k-1};\tau|\gamma_{0},x_1,\cdots,x_{k-2};0)},
\end{align}
and
\begin{align}\label{e28}
 \mathcal{P}_{\mathcal{A}}(x_1,\tau;\cdots;x_{k-1},(k-1)\tau) 
=\mathcal{Q}_{k-1}\left(x_1,\cdots,x_{k-1};\tau \big|\gamma_0,x_1,\cdots,x_{k-2};0\right).
\end{align}
Further more, the condition density $\mathcal{P}_{\mathcal{A}}(x,t|x_1,\tau;\cdots;x_{k-1},(k-1)\tau)$ exists due to the identity
\begin{align}\label{e1_29}
\begin{split}
&\mathcal{P}_{\mathcal{A}}(x,t|x_1,\tau;\cdots;x_{k-1},(k-1)\tau)\\
=&\int_{\mathbb{R}^{d}}\mathcal{P}_{\mathcal{A}}(x,t|x_1,\tau;\cdots;x_{k-1},(k-1)\tau;x_{k}, k\tau)\times \mathcal{P}_{\mathcal{A}}(x_{k},k\tau|x_1,\tau;\cdots;x_{k-1},(k-1)\tau){\rm d}x_{k}.
\end{split}
\end{align}
Substitute  equations \eqref{e26} and \eqref{e27} into \eqref{e1_29}, we get
\begin{align}\label{e30}
\begin{split}
&\mathcal{P}_{\mathcal{A}}(x,t|x_1,\tau;\cdots;x_{k-1},(k-1)\tau)\\
=&\int_{\mathbb{R}^{(k-1)d}}\int_{\mathbb{R}^{d}}\mathcal{Q}_{k}(x_1,\cdots,x_{k-1},x_{k};\tau|y_1,\cdots,y_{k-1},x;t-(k-1)\tau){\rm d}x_k \\
&~~~~~~~ \times\dfrac{\mathcal{Q}_{k}(y_1,\cdots,y_{k-1},x;t-(k-1)\tau|\gamma_{0},x_1,\cdots,x_{k-1}; 0) }{\mathcal{Q}_{k-1}(x_1,\cdots,x_{k-1};\tau|\gamma_{0},x_1,\cdots,x_{k-2};0)} \prod_{i=1}^{k-1}{\rm d}y_i\\
=&\int_{\mathbb{R}^{(k-1)d}}\mathcal{Q}_{k-1}(x_1,\cdots,x_{k-1};\tau|y_1,\cdots,y_{k-1},x;t-(k-1)\tau)\\
&~~~~~~~\times \dfrac{\mathcal{Q}_{k}(y_1,\cdots,y_{k-1},x;t-(k-1)\tau|\gamma_{0},x_1,\cdots,x_{k-1}; 0) }{\mathcal{Q}_{k-1}(x_1,\cdots,x_{k-1};\tau|\gamma_{0},x_1,\cdots,x_{k-2};0)} \prod_{i=1}^{k-1}{\rm d}y_i\\
=&\int_{\mathbb{R}^{(k-1)d}}\mathcal{Q}_{k-1}(x_1,\cdots,x_{k-1};\tau|y_1,\cdots,y_{k-1};t-(k-1)\tau)\\
&~~~~~~~\times \dfrac{\mathcal{Q}_{k}(y_1,\cdots,y_{k-1},x;t-(k-1)\tau|\gamma_{0},x_1,\cdots,x_{k-1}; 0) }{\mathcal{Q}_{k-1}(x_1,\cdots,x_{k-1};\tau|\gamma_{0},x_1,\cdots,x_{k-2};0)} \prod_{i=1}^{k-1}{\rm d}y_i.
\end{split}
\end{align}
To derive last identity in \eqref{e30}, we use
\begin{equation*}
	\mathcal{Q}_{k-1}(x_1,\cdots,x_{k-1};\tau|y_1,\cdots,y_{k-1},x;t-(k-1)\tau)=\mathcal{Q}_{k-1}(x_1,\cdots,x_{k-1};\tau|y_1,\cdots,y_{k-1};t-(k-1)\tau),	
\end{equation*}
 which follows from the fact that $\left(X_1^T(\tau), X_2^T(\tau),\cdots, X_{k-1}^T(\tau)\right)^T$ in \eqref{e10} only depends on 
 \begin{align*}
 \left(X_1^T(t-(k-1)\tau), X_2^T(t-(k-1)\tau), \cdots, X_{k-1}^T(t-(k-1)\tau)\right)^T
 \end{align*}
  and independent of $X_{k}(t-(k-1)\tau)$. 

By using equations \eqref{e30} and \eqref{e28},  the density $\mathcal{P}_{\mathcal{A}}(x,t)$ of Marcus SDDE \eqref{e9} exists by the identity
\begin{equation} \label{e32}
\begin{split}
\mathcal{P}_{\mathcal{A}}\left(x,t\right)=&\int_{\mathbb{R}^{(k-1)d}} \mathcal{P}_{\mathcal{A}} \left(x,t|x_1,\tau;\cdots;x_{k-1},(k-1)\tau\right)\times \mathcal{P}_{\mathcal{A}}\left(x_1,\tau;\cdots;x_{k-1},(k-1)\tau\right) \prod_{i=1}^{k-1}{\rm d}x_i\\
=&\int_{\mathbb{R}^{2(k-1) d}} \mathcal{Q}_{k-1}\left( x_1, \cdots,x_{k-1};\tau \big|y_1,\cdots,y_{k-1}; t-(k-1)\tau\right)\\
&~~~~~~~~~\times \mathcal{Q}_{k}\left( y_1, \cdots,y_{k-1}, x;t-(k-1)\tau \big|\gamma_0,x_1,\cdots,x_{k-1}; 0\right)\prod_{i=1}^{k-1}{\rm d}x_i\prod_{i=1}^{k-1}{\rm d}y_i.
\end{split}
\end{equation}
Therefore, (ii) is true.

Finally, we show the statement (iii) of Theorem 2 is true. In fact, for $t=k\tau$ with $k\geq 2$ and $k \in \mathbb{N}$,
\begin{align}\label{e33}
\begin{split}
&\mathcal{P}_{\mathcal{A}}(x,k\tau)=\int_{\mathbb{R}^{d}}\mathcal{P}_{\mathcal{A}}(x,k\tau|x_1,\tau;\cdots;x_{k-1},(k-1)\tau)\times \mathcal{P}_{\mathcal{A}}(x_1,\tau;\cdots;x_{k-1},(k-1)\tau)\prod_{i=1}^{k-1}{\rm d}x_{i}.
\end{split}
\end{align}

Substitute equations \eqref{e27} and \eqref{e28} into \eqref{e33}, we get equation \eqref{e25}. Therefor, (iii) is true.

\end{proof}

\section*{Appendix}
\subsection*{{A.1 ~~Proof of equation \eqref{e26}}.} 
{For $t \in ((k-1)\tau, k\tau]$ with $k\geq 2$ and $k\in \mathbb{N}$},
\begin{align}\label{e34}
\begin{split}
&\mathcal{P}_{\mathcal{A}}\left(x,  t|x_1,\tau;\cdots;x_{k-1},(k-1)\tau;x_k,k\tau\right)\\
=&\int_{\mathbb{R}^{(k-1)d}}\mathcal{P}_{\mathcal{A}}\left(y_1, t-(k-1)\tau;y_2, t-(k-2)\tau; \cdots; y_{k-1}, t-\tau; x,t|x_1,\tau;\cdots;x_{k-1}, (k-1)\tau;x_{k},k\tau\right) \prod_{i=1}^{k-1}{\rm d}y_i\\
=&\int_{\mathbb{R}^{(k-1)d}}p(X(t-(k-1)\tau)=y_1;X(t-(k-2)\tau)=y_2;\cdots;X(t-\tau)=y_{k-1}; X(t)=x|\\
&~~~~~~~~~~~~~~X(0)=\gamma_{0};X(\tau)=x_1;\cdots;X((k-1)\tau)=x_{k-1};X(k\tau)=x_k)\prod_{i=1}^{k-1}{\rm d}y_i\\
=&\int_{\mathbb{R}^{(k-1)d}}p(X_1(t-(k-1)\tau)=y_1;\cdots;X_{k-1}(t-(k-1)\tau)=y_{k-1}; X_{k}(t-(k-1)\tau)=x|\\
&~~~~~~~~~~~X_1(0)=\gamma_{0};\cdots; X_{k-1}((k-1)\tau)=X_{k}(0)=x_{k-1},X_{k}(\tau)=x_{k})\prod_{i=1}^{k-1}{\rm d}y_i\\
=&\int_{\mathbb{R}^{(k-1)d}}p(X_1(t-(k-1)\tau)=y_1;\cdots;X_{k-1}(t-(k-1)\tau)=y_{k-1}; X_{k}(t-(k-1)\tau)=x|\\
&~~~~~~~~~~~X_1(0)=\gamma_{0};\cdots;X_{k}(0)=x_{k-1};X_{1}(\tau)=x_1; \cdots;X_{k}(\tau)=x_{k})\prod_{i=1}^{k-1}{\rm d}y_i\\
=&\int_{\mathbb{R}^{(k-1)d}}p(X_1(\tau)=x_1;\cdots; X_{k}(\tau)=x_k|X_1(t-(k-1)\tau)=y_1;\cdots; X_{k}(t-(k-1)\tau)=x)\\
&~~~~~~~\times \dfrac{p(X_1(t-(k-1)\tau)=y_1;\cdots;X_{k}(t-(k-1)\tau )=x|X_1(0)=\gamma_{0};\cdots;X_{k}(0)=x_{k-1}) }{ p(X_1(\tau)=x_1;\cdots; X_{k}(\tau)=x_k|X_1(0)=\gamma_{0};\cdots;X_{k}(0)=x_{k-1})}\prod_{i=1}^{k-1}{\rm d}y_i\\
=&\int_{\mathbb{R}^{(k-1)d}}\mathcal{Q}_{k}(x_1,\cdots,x_{k};\tau|y_1,\cdots,y_{k-1},x;t-(k-1)\tau)\\
&~~~~~~~\times \dfrac{\mathcal{Q}_{k}(y_1,\cdots,y_{k-1},x;t-(k-1)\tau|\gamma_{0},x_1,\cdots,x_{k-1}; 0) }{\mathcal{Q}_{k}(x_1,\cdots,x_{k};\tau|\gamma_{0},x_1,\cdots,x_{k-1};0)}\prod_{i=1}^{k-1}{\rm d}y_i,
\end{split}
\end{align}
To derive the last identity, we use the notation as expressed in (26).  

\subsection*{{A.2 ~~Proof of equation \eqref{e27}}.}

{For $t=k\tau$ with $k\geq 2$ and $k\in \mathbb{N}$},
\begin{align}\label{e35}
\begin{split}
&\mathcal{P}_{\mathcal{A}}(x_{k},k\tau|x_1,\tau;\cdots; x_{k-1},(k-1)\tau)\\
=&p(X(k\tau)=x_{k}|X(0)=\gamma_{0};X(\tau)=x_{1};\cdots;X((k-2)\tau)=x_{k-2}; X((k-1)\tau)=x_{k-1})\\
=&p(X_{k}(\tau)=x_{k}|X_1(0)=\gamma_{0};X_{1}(\tau)=X_{2}(0)=x_1; \cdots;X_{k-1}(\tau)=X_{k}(0)=x_{k-1})\\
=&p(X_{k}(\tau)=x_k|X_1(0)=\gamma_{0};X_{2}(0)=x_1;\cdots;X_{k}(0)=x_{k-1}; X_{1}(\tau)=x_1;\cdots;X_{k-1}(\tau)=x_{k-1})\\
=&\dfrac{p(X_{1}(\tau)=x_1;\cdots;X_{k}(\tau)=x_k|X_1(0)=\gamma_{0};\cdots;X_{k}(0)=x_{k-1})}{p(X_{1}(\tau)=x_1;\cdots;X_{k-1}(\tau)=x_{k-1}|X_1(0)=\gamma_{0};\cdots;X_{k}(0)=x_{k-1})}\\  \vspace{10cm}
=&\frac{p(X_{1}(\tau)=x_1;\cdots;X_{k}(\tau)=x_k|X_1(0)=\gamma_{0};\cdots;X_{k}(0)=x_{k-1})}{p(X_{1}(\tau)=x_1;\cdots;X_{k-1}(\tau)=x_{k-1}|X_1(0)=\gamma_{0};\cdots;X_{k-1}(0)=x_{k-2})} \vspace{10cm} \\ 
=&\frac{\mathcal{Q}_{k}(x_1,\cdots,x_k; \tau|\gamma_{0},x_1,\cdots,x_{k-1};0)}{\mathcal{Q}_{k-1}(x_1,\cdots,x_{k-1};\tau|\gamma_{0},x_1,\cdots,x_{k-2};0)},
\end{split}
\end{align}
where the second last $`` ="$ follows from
\begin{align}\label{e36}
\begin{split}
&p(X_{1}(\tau)=x_1;\cdots;X_{k-1}(\tau)=x_{k-1}|X_1(0)=\gamma_{0};\cdots;X_{k-1}(0)=x_{k-2};X_{k}(0)=x_{k-1})\\
=&p(X_{1}(\tau)=x_1;\cdots;X_{k-1}(\tau)=x_{k-1}|X_1(0)=\gamma_{0};\cdots;X_{k-1}(0)=x_{k-2}),
\end{split}
\end{align}
which is the consequence of the fact that $X_1(\tau), X_2(\tau),\cdots, X_{k-1}(\tau)$ in Marcus SED \eqref{e10} only depends on their initial values $X_1(0), X_2(0), \cdots, X_{k-1}(0)$ and independent of $X_{k}(0)$.

\subsection*{A.3 ~~Proof of equation \eqref{e28}.}
{For $t=k\tau$ with $k\geq 2$ and $k\in \mathbb{N}$},
\begin{align}\label{e37}
\begin{split}
&\mathcal{P}_{\mathcal{A}}(x_1,\tau;x_2,2\tau;\cdots;x_{k-1},(k-1)\tau)\\
=&\mathcal{P}_{\mathcal{A}}(x_1,\tau)\times \mathcal{P}_{\mathcal{A}}(x_2,2\tau|x_1,\tau)\times\cdots \times \mathcal{P}_{\mathcal{A}}(x_{k-1}, (k-1)\tau|x_1,\tau;x_2,2\tau;\cdots;x_{k-2},(k-2)\tau)\\
\vspace{2ex}
=&\mathcal{Q}_{1}\left(x_1;\tau \big|\gamma_0;0\right)\times \dfrac{\mathcal{Q}_2\left(x_1,x_2;\tau \big|\gamma_0,x_1;0\right)}{\mathcal{Q}_{1}\left(x_1;\tau \big|\gamma_0;0\right)} \times \cdots \times \dfrac{\mathcal{Q}_{k-1}\left(x_1,\cdots,x_{k-1};\tau \big|\gamma_0,x_1,\cdots,x_{k-2};0\right)}{\mathcal{Q}_{k-2}\left(x_1,\cdots,x_{k-2};\tau \big|\gamma_0,x_1,\cdots,x_{k-3};0\right)}\\
=&\mathcal{Q}_{k-1}\left(x_1,\cdots,x_{k-1};\tau \big|\gamma_0,x_1,\cdots,x_{k-2};0\right).
\end{split}
\end{align}
Equation (31) has been used to derive the second last \```=" in \eqref{e37}.


\begin{thebibliography}{10}

\bibitem{Applebaum2009}
D.~Applebaum.
\newblock {\em {L}\'evy {P}rocesses and {S}tochastic {C}alculus, 2nd Edition}.
\newblock Cambridge University Press, 2009.

\bibitem{Duan2015}
J.~Duan.
\newblock {\em An {I}ntroduction to {S}tochastic {D}ynamics}.
\newblock Cambridge University Press, 2015.

\bibitem{Marcus1978}
S.~I. Marcus.
\newblock Modeling and {A}nalysis of {S}tochastic {D}ifferential {E}quations
  {D}riven by {P}oint {P}rocesses.
\newblock {\em IEEE Transactiona on Information Theory}, IT-24(2):164--172,
  1978.

\bibitem{Marcus1981}
S.~I. Marcus.
\newblock Modeling and {A}pproximation of {S}tochastic {D}ifferential
  {E}quations {D}riven by {S}emimartingales.
\newblock {\em Stochastics}, 4:223--245, 1981.

\bibitem{Kurtz1995}
T.~Kurtz, E.~Pardoux, and P.~Protter.
\newblock {S}tratonovich {S}tochastic {D}ifferential {E}quations {D}riven by
  {G}eneral {S}emimartingales.
\newblock {\em Annales de l'Institut Henri Poincar\'e-Probabilit\'es et
  Statistiques}, 1(32):351--377, 1995.

\bibitem{Sun2013}
X.~Sun, J.~Duan, and X.~Li.
\newblock {A}n {A}lternative {E}xpression for {S}tochastic {D}ynamical
  {S}ystems with {P}arametric {P}oisson {W}hite {N}oise.
\newblock {\em Probabilistic Engineering Mechanics}, 32:1--4, 2013.

\bibitem{Sun2017}
X.~Sun, X.~Li, and Y.~Zheng.
\newblock Governing {E}quations for {P}robability {D}ensities of {M}arcus
  {S}tochastic {D}ifferential {E}quations with {L}\'evy {N}oise.
\newblock {\em Stochastics and Dynamics}, 17(5):1750033, 2017.

\bibitem{K2016}
K.~K\"ummel.
\newblock {O}n the {D}ynamics of {M}arcus {T}ype {S}tochastic {D}ifferential
  {E}quations.
\newblock {\em Doctoral Dissertation,~Friedrich Schiller Universit\"at Jena},
  2016.

\bibitem{Zheng2017G}
Y.~Zheng and X.~Sun.
\newblock Governing {E}quations for {P}robability {D}ensities of {S}tochastic
  {D}ifferential {E}quations with {D}iscrete {T}ime {D}elays.
\newblock {\em Discrete and Continuous Dynamical Systems-Series B},
  22(9):3615--3628, 2017.

\bibitem{Protter2004}
P.~E. Protter.
\newblock {\em {S}tochastic {I}ntegration and {D}ifferential {E}quations, 2nd
  Edition}.
\newblock Springer, 2004.

\bibitem{Bichteler1987}
K.~Bichteler, J.~Gravereaux, and J.~Jacod.
\newblock {\em Malliavin {C}alculus for {P}rocesses with {J}umps}.
\newblock Gordon and Breach, 1987.

\bibitem{Aronson1968}
D.~G. Aronson.
\newblock Non-negative {S}olutions of {L}inear {P}arabolic {E}quations.
\newblock {\em Annali Della Scuola Normale Superiore DI Pisa-Classe di
  Scienze}, 22:607--694, 1968.

\bibitem{Davies1989}
E.~B. Davies.
\newblock {\em Heat {K}ernels and {S}pectral {T}heory}.
\newblock Cambridge University Press, 1989.

\bibitem{Bogachev2009}
V.~I. Bogachev, M.~Roeckner, and S.~V. Shaposhnikov.
\newblock Positive {D}ensities of {T}ransition {P}robabilities of {D}iffusion
  {P}rocesses.
\newblock {\em Therory of Probability and Its Applications}, 53(2):194--215,
  2009.

\end{thebibliography}
\end{document}